\def\Eq#1#2{\ifthenelse{\equal{#1}{*}}
  {\begin{equation*}\begin{aligned}#2\end{aligned}\end{equation*}}
  {\begin{equation}\begin{aligned}\label{E#1}#2\end{aligned}\end{equation}}}
\newtheorem{thm}{Theorem}
\newtheorem{prop}{Proposition}
\newtheorem{coro}{Corollary}
\newtheorem{lemm}{Lemma}
\theoremstyle{remark}
\newtheorem{claim}{Claim} 
\newtheorem{prclm}{Proof of Claim}
\theoremstyle{definition}
\newcommand{\dlth}{\mathcal{H}}
\newcommand{\dlta}{\mathcal{A}}
\newcommand{\dltb}{\mathcal{B}}
\newcommand{\NN}{\mathbb{N}} 
\newcommand{\QQ}{\mathbb{Q}}
\newcommand{\RR}{\mathbb{R}}
\newcommand{\cnc}{\mathfrak{c}}
\newcommand{\RRx}{\overline{\mathbb{R}}}
\newcommand{\card}{\mbox{\rm card}}
\newcommand{\supp}{\mbox{\rm supp}}
\newcommand{\map}{\longrightarrow}
\newcommand{\rfl}[2]{\left( #1 \, \vert \, #2 \right)}
\author[P\'eter T\'oth]{P\'eter T\'oth}
\title[Measurable solutions of an alternative functional equation]{Measurable solutions of an alternative functional equation} 
\address{Institute of Mathematics,
University of Debrecen,
4002 Debrecen, Pf.~400, Hungary}
\email{toth.peter@science.unideb.hu}
\keywords{functional equation, alternative functional equation, 
Darboux functions, measurable functions, 
generalized weighted quasi-arithmetic means} 
\subjclass[2020]{39B22, 26A15}
\thanks{The research has been supported by 
the EK\"OP-24-0 University Research Scholarship Program of 
the Ministry for Culture and Innovation 
from the source of the 
National Research, Development and Innovation Fund, 
and by the PhD Excellence Scholarship from the 
Count Istv\'an Tisza Foundation 
for the University of Debrecen.}
\begin{document}

\begin{abstract}
In this paper we investigate the functional equation 
\[ 
\varphi \left( \frac{x+y}{2} \right) 
\left( \psi_1(x) - \psi_2(y) \right) = 0 
\hspace{20mm} 
\left( \mbox{ for all } 
x \in I_1 \mbox{ and } y \in I_2 \right) 
\] 
where $ I_1 \,, I_2 $ are open intervals of $ \RR $, 
$ J = \frac{1}{2} \left( I_1 + I_2 \right) $ moreover 
$ \psi_1 : I_1 \rightarrow \RR $, 
$ \psi_2 : I_2 \rightarrow \RR $ and 
$ \varphi : J \rightarrow \RR $ 
are unknown functions. 
We describe the structure of the possible solutions 
assuming that $ \varphi $ is measurable. 
In the case when $ \varphi $ is a derivative, we 
give a complete characterization of the solutions. 
Furthermore, we present an example of a solution 
consisting of irregular Darboux functions. 
This provides the answer to an open problem 
proposed during the 
{\em $59$th International Symposium on Functional Equations}. 
\end{abstract}

\maketitle

\section{Introduction} 
During the whole paper let 
$ I_1 \,, I_2 $ be 
nonempty open intervals of the real line, and let 
$ J := \frac{1}{2} \left( I_1 + I_2 \right) $. 
Then $ J $ is again a nonempty, open interval. 
We will consider the functional equation 
\begin{equation}\label{eq-maineq} 
\varphi \left( \frac{x+y}{2} \right) 
\bigl( \psi_1(x) - \psi_2(y) \bigr) = 0 
\hspace{20mm} 
\left( \mbox{ for all } 
x \in I_1 \mbox{ and } y \in I_2 \right). 
\end{equation} 
If there exist functions 
$ \psi_1 : I_1 \map \RR $, 
$ \psi_2 : I_2 \map \RR $ and 
$ \varphi : J \map \RR $ 
such that the equation \eqref{eq-maineq}
is fulfilled for all 
$ x \in I_1 $ and for all $ y \in I_2 \, $, then 
we say that the triplet of functions 
$ \left( \varphi \,, \psi_1 \,, \psi_2 \right) $ 
is a solution of \eqref{eq-maineq}. 
This functional equation appears during the investigation 
of the invariance problem of 
generalized weighted quasi-arithmetic means. 
Since this class of means was introduced by 
J.~Matkowski in 2010 \cite{Mat10}, the 
generalized weighted quasi-arithmetic means are often referred to 
as Matkowski means. 
Special cases of the general invariance problem 
for Matkowski means have been investigated by many 
authors, such as Matkowski \cite{Mat99}, 
Jarczyk \cite{JM06, Jar07}, 
Daróczy, Páles \cite{DP01, DP02}, 
Maksa \cite{DMP00} and 
Burai \cite{Bur06, Bur07}. 
For a summary about invariance problems 
of means the interested reader 
should study the survey paper of 
Jarczyk and Jarczyk \cite{JJ18}. 

The general equivalence problem of Matkowski means 
can be formulated as follows. 
If $ \emptyset \neq J \subseteq \RR $ is an open interval, 
then we are looking for those Matkowski means 
$ M, N, K : J \times J \map J $ 
which fulfill the invariance equation 
\begin{equation}\label{eq-invariance_eq}
M \left( N(u,v) , K(u,v) \right) = M(u,v) 
\hspace{20mm} 
(u,v \in J). 
\end{equation} 
A recent achievement in the investigation of 
the general invariance problem of Matkowski means is 
the work of T.~Kiss \cite{Kis22}. In that paper 
-- based on certain ideas of Z.~ Páles -- 
a particular functional equation 
is established which is equivalent to 
\eqref{eq-invariance_eq}. 
This equivalent equation is 
\begin{equation}\label{eq-inveq_equiv}
F \left( \frac{x+y}{2} \right) + f_1(x) + f_2(y) = 
G \left( g_1(x) + g_2(y) \right) 
\end{equation}
where $ x,y $ are from an open interval. 

Our considered equation \eqref{eq-maineq} can be 
derived from \eqref{eq-inveq_equiv}, such that 
$ \varphi , \psi_1 \,, \psi_2 $ are 
obtained as compositions of functions 
which either appear in \eqref{eq-inveq_equiv} 
or they are derivatives of the functions in \eqref{eq-inveq_equiv}. 
However, functions themselves in \eqref{eq-inveq_equiv} 
are put together from the generator functions of 
the Matkowski means in the invariance problem. 
Thus it is clear that the initial regularity assumption for 
the generators determines the level of regularity 
we can assume when working on our main equation 
\eqref{eq-maineq}. 

The solutions of equation \eqref{eq-maineq} 
were characterized by Kiss \cite{Kis24} under the 
assumption that the set of zeros of 
$ \varphi $ is closed. 
The primary information about the solutions is that 
these must be constant on some open subintervals of 
their domain. This implies some consequences 
which are crucial when one wishes to determine 
the solutions of the equation \eqref{eq-inveq_equiv} 
and thus the invariance equation \eqref{eq-invariance_eq}. 
On the other hand, when 
$ \varphi $ is obtained from \eqref{eq-inveq_equiv} 
with the previously mentioned process, 
then $ \varphi $ is a derivative, 
so $ \varphi^{-1}(0) $ is not necessarily closed.
This is the reason why 
during the Problems and Remarks session of the 
59th International Symposium on Functional Equations 
T.~Kiss proposed 
the following question (see \cite{59ISFE}). 
Does the characterization \cite[Theorem 6]{Kis24} 
of the solutions of \eqref{eq-maineq} remain valid when the 
Darboux property is assumed for $ \varphi $, 
instead of the closedness of $ \varphi^{-1} (0) $? 

This would mean that \eqref{eq-inveq_equiv} could 
be solved imposing only differentiabilty on the 
functions in the equation, 
without requiring the continuity of the derivatives. 
To our best knowledge, 
continuous differentiability is 
the weakest regularity assumption under which 
the solutions of \eqref{eq-inveq_equiv} are discussed. 
However, it seems reasonable to investigate 
\eqref{eq-inveq_equiv} and thus the invariance problem of 
Matkowski means under weaker, more natural 
regularity assumptions. 
The main result of our paper will make it possible 
to consider \eqref{eq-inveq_equiv} 
assuming that the functions in the equation are 
differentiable (but not necessarily continuously 
differentiable). 

Let us mention that a similar process of 
weakening the regularity assumptions occurred during 
the investigation of the Matkowksi--Sut\^o problem. 
In that case, firstly 
the analytic solutions have been described by 
Sut\^o \cite{Sut14a, Sut14b}, 
then the assumption was weakened to 
twice continuous differentiability 
by Matkowski \cite{Mat99}. 
Dar\'oczy and P\'ales solved the Matkowski--Sut\^o 
equation under continuous differentiability 
\cite{DP01}, and finally they were able to 
solve it assuming only that the functions 
are continuous, strictly monotone (see \cite{DP02}). 

In our paper we are going to give a negative answer 
to the proposed problem of Kiss. 
We give a counterexample, namely a solution of 
\eqref{eq-maineq} consisting of Darboux functions 
which are not constant on any open subinterval. 
This information seems disappointing for one who 
wishes to solve \eqref{eq-inveq_equiv} under 
simple differentiability condition. 
However, in our main theorems, 
we are going to show that if we directly assume 
$ \varphi $ to be a derivative, then the 
solutions are exactly the same as in the setting 
discussed by Kiss in \cite{Kis24}. 
As hinted above, this was a desired achievement, 
because it means 
that the same techniques which were used 
in \cite{Kis22} during the 
investigations of equation \eqref{eq-inveq_equiv} 
containing continuously differentiable functions 
can be applied in the situation when only 
differentiability is assumed. 

The mentioned main result of this paper is the 
characterization of the solutions of \eqref{eq-maineq} 
when $ \varphi $ is assumed to be measurable. 
Measurability is a common weak regularity assumption 
in the theory of functional equations. 
For many notable functional equations 
(e.g. Cauchy, Jensen, Pexider type equations) the 
measurability of the solution automatically 
implies very strong regularity, such as 
higher order differentiability, as it is 
thoroughly discussed in \cite{Kuc09}. 
We will see that for the alternative 
equation \eqref{eq-maineq} the situation is different, 
as the solutions can be rather irregular. 
Yet the set of problematic irregularities of $ \varphi $ 
is just a zero measure set, which is still tolerable 
for the applications. 
Numerous authors have addressed the question of 
measurability of the solutions of particular 
functional equations in different settings, 
such as Járai \cite{Jar86, Jar10}, 
Losonczi \cite{Los93}, Kochanek and Lewicki \cite{KL11}, 
Ger \cite{Ger18}. 
The interested reader should also get acquainted 
with the monograph \cite{Jar05} of Járai 
concerning the regularity of 
solutions of functional equations.

\section[Darboux solutions]{A solution consisting of irregular Darboux functions}

In this section 
we answer the open problem 
proposed by Kiss \cite{59ISFE} during the $59$th ISFE. 
Namely, we prove the existence of a 
nontrivial solution of \eqref{eq-maineq} consisting of 
such Darboux functions that are not constant on 
any nonempty open interval. 

Throughout the whole paper we consider 
the real line $ \RR $ equipped with the 
standard norm topology 
induced by the absolute value. 
The cardinal number 
continuum will be denoted by $ \cnc $. 
We will use the notation 
$ \RRx := \RR \cup \lbrace -\infty, +\infty \rbrace $ 
for the set of extended real numbers. 
If $ \emptyset \neq I \subseteq \RR $ is an interval, 
we say that a function 
$ f : I \map \RR $ has the 
Darboux property (or, simply, is Darboux) if 
the following assertion holds: 
for all $ x,y \in I $ such that 
$ x < y $ and $ f(x) \neq f(y) $, 
and for every 
$ \lambda \in 
\left] \, \min \lbrace f(x) , f(y) \rbrace \, , \, 
\max \lbrace f(x) , f(y) \rbrace \, \right[ \,$, 
there exists 
$ z \in \left] x , y \right[ $ for which 
$ f(z) = \lambda $ holds. 
We note that this is equivalent to 
the property that $ f(J) $ is an 
interval whenever $ J \subseteq I $ is an interval 
(see \cite[Theorem 9.2]{vRS82}). 

For any function 
$ f : I \map \RR $ the support of $ f $ 
will be denoted by $ \supp f $: 
\[
\supp f := 
\lbrace \, x \in I \ : \ f(x) \neq 0 \, \rbrace. 
\]

%
%--------------------------------------
%------- NOTATIONS ARE CLARIFIED UPON THIS POINT -----
%--------------------------------------
%

Sometimes we call a triplet a functions 
$ (\varphi \,, \psi_1 \,, \psi_2) $ a 
trivial solution of \eqref{eq-maineq} if 
$ \varphi(u) = 0 $ for all $ u \in J $ or 
$ \psi_1(x) = \psi_2(y) $ for all 
$ (x,y) \in I_1 \times I_2 \, $. 

As mentioned earlier, Kiss proposed the question whether 
the solutions of \eqref{eq-maineq} can be characterized in the 
same way as in \cite[Theorem 6]{Kis24}, when we assume that 
$ \varphi $ is Darboux (or even all three functions 
$ \varphi \,, \psi_1 \,, \psi_2 $ are Darboux). 
We proceed with an example that gives a negative answer 
for this question. 

\begin{thm}\label{thm-darboux_example}
The functional equation \eqref{eq-maineq} has a solution 
$ \left( \varphi \,, \psi_1 \,, \psi_2 \right) $ such that 
$ \varphi : \RR \map \RR $, 
$ \psi_1 : \RR \map \RR $ and 
$ \psi_2 : \RR \map \RR $ are Darboux functions, 
yet none of these functions are continuous 
at any $ x \in \RR $. 
\end{thm}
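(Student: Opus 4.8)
The plan is to choose $\supp\varphi$ to be a proper dense $\QQ$-linear subspace $V$ of $\RR$, which makes the alternative in \eqref{eq-maineq} consistent by pure arithmetic: if the midpoint $\tfrac{x+y}{2}$ lies in $V$, then so does $x+y$ (because $V$ is a $\QQ$-vector space), and hence $x$ and $y$ have opposite images under the quotient map $\RR\to\RR/V$. Consequently it will be enough to let $\psi_1$ and $\psi_2$ factor through $\RR/V$, one of them through a fixed surjection $h\colon\RR/V\to\RR$ and the other through the composition of $h$ with negation.

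First I would carry out the main construction. By transfinite recursion of length $\cnc$ I would build a $\QQ$-subspace $V\subseteq\RR$ with: (a) $\QQ\subseteq V$; (b) both $V$ and $\RR\setminus V$ meet every nonempty open subinterval of $\RR$ in a set of cardinality $\cnc$; and (c) $\dim_{\QQ}(\RR/V)=\cnc$, i.e.\ $|\RR/V|=\cnc$. Property (b) in particular makes $V$ dense with dense complement, and together with (a) it forces $|V|=\cnc$ and $|V/\QQ|=\cnc$, so that $V$ is the disjoint union of exactly $\cnc$ cosets of $\QQ$, each of them dense in $\RR$; write $V=\bigsqcup_{t\in\RR}V_t$ for an enumeration of these cosets by $\RR$. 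Let $\pi\colon\RR\to\RR/V$ be the quotient homomorphism; since $V$ is dense, every coset of $V$ meets every nonempty open interval, so $\pi(I)=\RR/V$ for each such $I$. Finally, using (c), fix a surjection $h\colon\RR/V\to\RR$.

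Then I would exhibit the solution: set $\varphi(u):=t$ for $u\in V_t$ and $\varphi(u):=0$ for $u\in\RR\setminus V$, and set $\psi_1(x):=h(\pi(x))$ and $\psi_2(y):=h(-\pi(y))$. Since $\supp\varphi\subseteq V$, equation \eqref{eq-maineq} holds: whenever $\varphi\bigl(\tfrac{x+y}{2}\bigr)\neq0$ we have $\tfrac{x+y}{2}\in V$, hence $x+y\in V$, hence $\pi(x)=-\pi(y)$, and therefore $\psi_1(x)=h(\pi(x))=h(-\pi(y))=\psi_2(y)$; in the remaining case the first factor vanishes. This solution is nontrivial: $\varphi$ is onto $\RR$, and since $h$ is onto one can choose $x,y$ with $\pi(x)$ and $-\pi(y)$ having distinct $h$-values, giving $\psi_1(x)\neq\psi_2(y)$. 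The regularity is then a short check: each $V_t$ with $t\neq0$, as well as $\RR\setminus V$, is dense, so $\varphi$ attains every real value on every nonempty open interval; hence $\varphi(K)$ is an interval for every interval $K$, so $\varphi$ is Darboux, and $\varphi$ is continuous at no point. Likewise $\psi_1(I)=h(\pi(I))=h(\RR/V)=\RR$ for every interval $I$ of positive length, so $\psi_1$ is Darboux, while each fibre $\psi_1^{-1}(c)=\pi^{-1}(h^{-1}(c))$ is a nonempty union of cosets of $V$, hence dense, so $\psi_1$ is nowhere continuous; the identical computation with $-\pi$ in place of $\pi$ disposes of $\psi_2$.

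The step I expect to be the main obstacle is the construction of $V$ in the second paragraph. The recursion must, at each of the $\cnc$ stages, enlarge $V$ so as to place a new point into a prescribed perfect set (securing (b) for $V$), and it must also commit a point of the complement that stays $\QQ$-linearly independent of the part of $V$ constructed so far (securing (b) for $\RR\setminus V$ and, in particular, (c)), all while preserving the $\QQ$-subspace structure and the inclusion $\QQ\subseteq V$. Since fewer than $\cnc$ linear-algebraic constraints have accumulated before any given stage, this is possible by a standard --- if somewhat delicate --- bookkeeping argument of Bernstein type; once $V$ is available, everything above is an elementary verification.
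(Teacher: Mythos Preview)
Your proposal is correct and follows the same blueprint as the paper: choose a dense proper $\QQ$-subspace $V\subseteq\RR$ containing $\QQ$ with $|\RR/V|=\cnc$ and $|V/\QQ|=\cnc$, let $\psi_1,\psi_2$ factor through the quotient $\RR/V$ (one via a fixed surjection $h$, the other via $h$ composed with negation), and let $\varphi$ factor through $V/\QQ$ so that $\supp\varphi\subseteq V$. The verification of \eqref{eq-maineq} and of the Darboux and nowhere-continuity properties then proceeds exactly as you outline, and the paper does the same.

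The one substantive difference concerns precisely the step you flag as the main obstacle, the construction of $V$. You propose a Bernstein-type transfinite recursion of length $\cnc$; the paper avoids this entirely by splitting a Hamel basis. It fixes a Hamel basis $\dlth$ of $\RR$ over $\QQ$ with $1\in\dlth$, partitions $\dlth$ into two pieces $\dlth_1\ni 1$ and $\dlth_2$, each of cardinality $\cnc$, and sets $V:=G_1:=\operatorname{span}_\QQ\dlth_1$. Then your properties (a)--(c) are immediate: $\QQ\subseteq G_1$ because $1\in\dlth_1$; $\RR/G_1$ has a system of representatives containing $\dlth_2$, hence has cardinality $\cnc$; and $G_1/\QQ$ has cardinality $\cnc$ because distinct elements of $\dlth_1$ lie in distinct $\QQ$-cosets. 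No recursion or bookkeeping is needed. Apart from this simplification (and cosmetic choices: the paper takes $h$ to be a bijection and gives $\varphi$ range $[0,1[$ rather than $\RR$), the two arguments coincide.
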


\begin{proof}
Let $ \dlth $ be a Hamel basis for $ \RR $, 
that is, $ \dlth $ is a basis of $ \RR $ 
if we consider it as a vector space over $ \QQ $. 
It is well-known 
(see \cite[Theorem 4.2.1.]{Kuc09}) that such basis 
$ \dlth $ exists, moreover we may assume that 
$ 1 \in \dlth $. 
As the cardinality of any Hamel basis 
is continuum, we may define a bijection 
$ \beta : \left] 0, 2 \right[ \map \dlth $ 
such that $ \beta(1) = 1 $. 
We split $ \dlth $ into two parts 
with the help of this bijection:  
\[
\dlth_1 := \beta \left( \, 
\left] 0,1 \right] \, \right) 
\hspace{10mm} \mbox{ and } \hspace{10mm} 
\dlth_2 := \beta \left( \, 
\left] 1,2 \right[ \, \right). 
\]
Clearly, $ 1 \in \dlth_1 $ and 
$ \card \dlth_1 = \card \dlth_2 = \cnc $. 
Let $ G_1 $ and $ G_2 $ be the linear hulls of 
$ \dlth_1 $ and $ \dlth_2 $ over $ \QQ $. 
That is, 
\[
G_i := 
\lbrace \, 
q_1 h_1 + \dots + q_n h_n \ : \ 
n \in \NN \mbox{ and } 
q_1 \,, \dots \,, q_n \in \QQ \,, 
h_1 \,, \dots h_n \,, \in \dlth_i 
\, \rbrace
\]
for $ i = 1,2 $. 
Due to the definition, $ G_i $ is in fact a 
vector space over $ \QQ $ with the basis $ \dlth_i $. 
In particular, $ \left( G_i \,, + \right) $ is an 
additive subgroup of $ \left( \RR , + \right) $ 
for $ i=1,2 $. 

\begin{claim}\label{clm-ekv1}
Let us define the relation 
$ \sim_1 $ on $ \RR $ 
in the following way: 
for any $ x,y \in \RR $ 
\[
x \sim_1 y \ \Longleftrightarrow \ x-y \in G_1 \, . 
\]
Then $ \sim_1 $ is an equivalence relation. 
Moreover, if $ b,h \in \dlth_2 $ 
are arbitrary then 
$ b \sim_1 h $ implies $ b = h $. 
\end{claim} 

\begin{prclm}
One can easily check that $ \sim_1 $ is indeed 
reflexive, symmetric and transitive, 
because $ \left( G_1 \,, + \right) $ is a group. 
Moreover, if $ b \sim_1 h $ then 
their difference is in $ G_1 \,$, 
hence there exist $ n \in \NN $ and 
$ q_1 \,, \dots \,, q_n \in \QQ $, 
$ h_1 \,, \dots \,, h_n \in \dlth_1 $ such that 
\[
b-h = q_1 h_1 + \dots + q_n h_n 
\] 
which means 
$ 0 = h - b + q_1 h_1 + \dots + q_n h_n \,$. 
Now $ h, b \in \dlth_2 $ so if 
$ b \neq h $ was true then this would mean that the set 
$ \lbrace h, b, h_1 \,, \dots \,, h_n \rbrace 
\subset \dlth $ 
is linearly dependent which is impossible, since 
$ \dlth $ is a basis. 
\hfill $ \boxtimes $ 
\end{prclm}

Since an equivalence relation always induces a partition on the 
original set, we will consider the equivalence classes 
of $ \sim_1 \,$. 
For any number $ x \in \RR $ its equivalence 
class will be denoted by $ [x]_1 \,$. 
We may note that $ [x]_1 \,$ is 
nothing else but the coset 
$ x + G_1 $ of the subgroup 
$ \left( G_1 \,, + \right) $ 
in the Abelian group $ \left( \RR \,, + \right) $. 
Let $ \dlta $ denote the set of all equivalence classes: 
\[
\dlta := \lbrace \, [x]_1 \, : \, x \in \RR \, \rbrace. 
\]
As assured in Claim \ref{clm-ekv1}, 
all the elements of $ \dlth_2 $ are 
in different equivalence classes, so 
$ \cnc = \card \dlth_2 \leq \card \dlta 
\leq \card \RR = \cnc $, hence 
$ \card \dlta = \cnc $. So there exists a bijection 
$ \gamma : \dlta \map \RR $. 

\begin{claim}\label{clm-Drbx1}
The function $ f : \RR \map \RR $  defined as 
\[
f(x) := \gamma \left( \, [x]_1 \, \right) 
\hspace{20mm} ( x \in \RR ) 
\] 
maps every nonempty open interval $ I \subseteq \RR $ 
onto $ \RR $. In particular, $ f $ has the 
Darboux property, but it $ f$ is not continuous 
at any point. 
\end{claim} 

\begin{prclm}
First of all, every class in $ \dlta $ is dense in $ \RR $. 
This follows from the fact that 
$ 1 \in \dlth_1 $ implies $ \QQ \subset G_1 \,$, 
thus, for all $ x \in \RR $, the inclusion 
$ [x]_1 = x + G_1 \supset x + \QQ $ holds while 
$ \QQ $ is dense. 

Now let $ \emptyset \neq I \subseteq \RR $ be 
any open interval and let 
$ \lambda \in \RR $ be arbitrary. 
As $ \gamma : \dlta \map \RR $ is a bijection, 
there exists $ w \in \RR $ such that 
$ \gamma \left( [w]_1 \right) = \lambda $. 
As we have established earlier in the proof, 
$ [w]_1 $ is dense in $ \RR $, 
so there exists $ z \in [w]_1 $ 
such that $ z \in I $. 
But then 
\[
f(z) = \gamma \left( [z]_1 \right) = 
\gamma \left( [w]_1 \right) = \lambda. 
\]
Since $ \lambda $ was arbitrary, 
we have obtained $ f(I) = \RR $. 
The fact that $ f $ is nowhere continuous but the 
has the Darboux property is an immediate consequence. 
\hfill $ \boxtimes $ 
\end{prclm}

We may note that the function 
$x \mapsto f(-x)$ is again a nowhere continuous 
Darboux function. 

\begin{claim}\label{clm-Drbx2}
There exists a nowhere continuous Darboux function 
$ \varphi : \RR \map\RR $ such that 
$ \supp \varphi = G_1 \,$. 
\end{claim} 

\begin{prclm} 
Let us consider a relation 
$ \sim_2 $ on $ G_1 $ defined by 
\[
x \sim_2 y \ \Longleftrightarrow \ x-y \in \QQ 
\hspace{15mm} \left( x,y \in G_1 \right). 
\]
It is trivial that 
$ \sim_2 $ is an equivalence relation, since 
$ \left( \QQ , + \right) $ is a subgroup of 
$ \left( G_1 \,, + \right) $. Consequently, the 
equivalence classes yield a partition for $ G_1 \,$. 
We may denote 
the equivalence class of an element 
$ x \in G_1 $ by $ [x]_2 \,$, 
and this class is simply the coset $ x + \QQ $ 
in the Abelian group $ \left( G_1 \,, + \right) $. 
Therefore every equivalence class is dense in $ \RR $. 

We may also claim that no equivalence class can contain 
two different elements from $ \dlth_1 $. Indeed, 
$ h,b \in \dlth_1 \,$, $ h \neq b $ and $ h \sim_2 b $ 
would mean that there exists 
$ q \in \QQ $ such that 
$ h-b = q $, so $ 0 = b - h + q $. 
In other words, 
$ \lbrace 1, b, h \rbrace \subset 
\dlth_1 \subset \dlth $ is a 
linearly dependent system, 
which contradicts that $ \dlth $ is a Hamel basis. 

$ \dltb $ will denote the set of 
equivalence classes, i.e. 
$ \dltb := \lbrace \, [x]_2 \, : \, x \in G_1 \, \rbrace $. 
Since all elements from $ \dlth_1 $ are 
in different classes, we have 
$ \cnc = \card \dlth_1 \leq \card \dltb \leq \cnc $ 
which ensures $ \card \dltb = \cnc $. 
Hence there exists a bijection 
$ \delta : \dltb \map \left] 0,1 \right[ \, $. 

Let us define an auxiliary function 
$ \psi : G_1 \map \left] 0,1 \right[ $ 
with the formula 
\[
\psi(x) := \delta \left( \, [x]_2 \, \right) 
\hspace{15mm} \left( x \in G_1 \right). 
\]
Given this $ \psi $ we may construct 
$ \varphi : \RR \map \RR$ as 
\[
\varphi(x) := 
\begin{cases}
\psi(x) \qquad & \mbox{ if } x \in G_1 \\ 
0 \qquad & \mbox{ if } x \in \RR \setminus G_1 \,. 
\end{cases}
\]
Clearly, $ \supp \varphi = G_1 \,$, 
as the range of $ \delta $ is $ \left] 0,1 \right[ \, $, 
so $ \varphi(x) = \psi(x) \neq 0 $ 
for all $ x \in G_1 \,$. 
Now let $ I \subseteq \RR $ be any nonempty open interval 
and $ \lambda \in \left] 0,1 \right[ \,$ be also 
arbitrary. 
Now there exists $ w \in G_1 \,$ such that 
$ \delta \left( [w]_2 \right) = \lambda $. 
But $ [w]_2 $ is dense in $ \RR $, so there exists 
$ z \in I \cap G_1 $ such that 
$ z \in [w]_2 $ which means 
\[
\varphi(z) = \psi(z) = 
\delta \left( [z]_2 \right) = 
\delta \left( [w]_2 \right) = \lambda. 
\]
This shows that 
$ \left] 0,1 \right[ \subseteq \varphi(I) $. 
Together with the fact that 
the range of $ \varphi $ is $ \left[ 0,1 \right[ \,$, 
this implies the Darboux property of $\varphi$. 
Moreover, $ \varphi $ cannot be continuous at any point 
$ x_0 \in \RR $, 
because in that case it would map an open neighborhood 
of $ x_0 $ into an interval with length less than $ 1 $. 
\hfill $ \boxtimes $   
\end{prclm}

Now let us define 
$ \psi_1 \,, \psi_2 : \RR \map \RR $ 
as  
\[
\psi_1(x) := f(x) 
\qquad \mbox{ and } \qquad
\psi_2(x) := f(-x) \hspace{10 mm} (x \in \RR) 
\]
where $ f $ is the function 
appearing in Claim \ref{clm-ekv1}. 
We proceed to show that 
$ \left( \varphi \,, \psi_1 \,, \psi_2 \right) $ 
is a solution of \eqref{eq-maineq}. 
We distinguish two cases. 
Firstly, suppose that $ x,y \in \RR $ 
are such that 
$ \frac{x+y}{2} \in \RR \setminus G_1 \,$. 
Then the equation \eqref{eq-maineq} 
is obviously fulfilled, since 
$ \varphi \left( \frac{x+y}{2} \right) = 0 $. 
The second possibility is when 
$ \frac{x+y}{2} \in G_1 \,$. That is, 
$ x+y = 2g $ for some $ g \in G_1 $ 
which is equivalent to 
$ x - (-y) = 2g \in G_1 \,$. 
This means $ x \sim_1 -y $ whence 
$ [x]_1 = [-y]_1 $. Consequently, 
\[
\psi_1(x) = f(x) = \gamma \left( [x]_1 \right) = 
\gamma \left( [-y]_1 \right) = 
f( -y ) = \psi_2(y) \,, 
\]
which implies 
$ \psi_1(x) - \psi_2(y) = 0 $, so \eqref{eq-maineq} 
is again fulfilled. 
Summarizing the two cases, we may conclude that, 
for every $ x \in \RR $ 
and for every $ y \in \RR $, 
the functional equation \eqref{eq-maineq} holds. 
Finally, let us recall that none of the three functions 
are continuous at any point, 
as we have established in 
Claims \ref{clm-Drbx1} and \ref{clm-Drbx2}.
\end{proof}

%-------------------------------------------------
%.................................................
%-------------------------------------------------
%-----------------------
%-----------
%-----------		MEASURABLE SOLUTIONS
%-----------
%-------------------------
%-------------------------------------------------
%.................................................
%-------------------------------------------------

\section[Measurable solutions]{Solutions under measurability condition}

As we will see in the sequel, the irregular behavior 
of the previous solution was due to the fact that 
$ \supp \varphi $ was either a set 
with Lebesgue measure $ 0 $, 
or it is possible that $ \varphi $ was not even measurable 
(depending on the measurability properties of 
the Hamel basis $ \dlth $). 

In this section we characterize the solutions of 
\eqref{eq-maineq} under the assumption that 
$ \varphi $ is Lebesgue measurable. 
When we claim that a triplet 
$ \left( \varphi \,, \psi_1 \,, \psi_2 \right) $ 
is a solution, we mean it in such a way that 
there are appropriate nonempty open intervals 
$ I_1 \,, I_2 $ and $ J = \frac{1}{2} (I_1 + I_2) $ 
so that 
$ \varphi : J \map \RR $, 
$ \psi_1 : I_1 \map \RR $ and 
$ \psi_2 : I_2 \map \RR $. In most cases 
we do not emphasize these domains explicitly. 
From now on, by measurability we 
mean Lebesgue measurability, 
and the Lebesgue measure on $ \RR $ 
will be denoted by $ \mu $. 

During our investigations 
we will often consider the reflection of a subset 
$ S \subseteq \RR $ with respect to 
another subset $ \emptyset \neq M \subseteq \RR $. 
For this purpose let us introduce a notation: 
\[
\rfl{S}{M} := 
\lbrace
2m - x \, \vert \, m \in M \mbox{ and } x \in S 
\rbrace. 
\] 
Using the pointwise multiplication and the 
Minkowski sum of sets we have $ \rfl{S}{M} = 2 M - S $. 
In the particular case when 
$ M = \lbrace m \rbrace $ is a singleton 
we will write 
$ \rfl{S}{m} $ instead of $ \rfl{S}{ \lbrace m \rbrace } $. 
Similarly, for 
$ s,m \in \RR $ we will use the notation 
$ \rfl{s}{m} = 2m-s $. 

We mention that a slightly more general version of 
this reflection operator is introduced in 
\cite{Kis24} as well, 
where plenty of its properties are discussed. 

At this point we formulate an elementary statement 
concerning sets of $ \RR $ with positive Lebesgue measure, 
which will be used frequently later on. 
\begin{prop}\label{prop-measure}
Let $ A \subseteq \RR $ be a Lebesgue-measurable set 
with $ \mu (A) > 0 $. 
Then there exists $ s \in A $ such that 
for all $\delta > 0$ it holds that 
\[
\mu \left( \, A \cap \left[ s, s+\delta \right] \, \right) > 0. 
\]
\end{prop}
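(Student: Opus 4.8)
The plan is to argue by contradiction using the Lebesgue density theorem, which is the natural tool here. Suppose the conclusion fails; then for every $s \in A$ there is some $\delta_s > 0$ with $\mu\bigl(A \cap [s, s+\delta_s]\bigr) = 0$. The idea is that such a point $s$ is a point where $A$ is "one-sided null to the right", and the density theorem forbids this from happening at almost every point of $A$.

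More concretely, I would first recall that by the Lebesgue density theorem, almost every point $s$ of $A$ is a density point, meaning $\lim_{r \to 0^+} \mu\bigl(A \cap [s-r, s+r]\bigr)/(2r) = 1$. Since $\mu(A) > 0$, the set of density points of $A$ is nonempty; pick such a point $s$. Now I claim $s$ satisfies the required property. Indeed, for any $\delta > 0$ we have, for all sufficiently small $r \in (0,\delta]$,
\[
\mu\bigl(A \cap [s, s+r]\bigr) \ge \mu\bigl(A \cap [s-r, s+r]\bigr) - \mu\bigl([s-r, s]\bigr) = \mu\bigl(A \cap [s-r,s+r]\bigr) - r.
\]
By the density property the first term on the right exceeds $(2r)(3/4) = 3r/2$ once $r$ is small enough, so $\mu\bigl(A \cap [s,s+r]\bigr) \ge r/2 > 0$, and hence $\mu\bigl(A \cap [s,s+\delta]\bigr) \ge \mu\bigl(A \cap [s,s+r]\bigr) > 0$ for every $\delta > 0$. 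This gives exactly the statement, with the same $s$ working for all $\delta$ simultaneously.

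The only genuinely delicate point is making sure the chosen $s$ actually lies in $A$ (rather than merely being a density point of $A$ in the ambient line): this is automatic because the set of density points of $A$ differs from $A$ by a null set, so in particular density points of $A$ that belong to $A$ still form a set of positive measure, hence a nonempty set. An alternative, more self-contained route that avoids invoking the full density theorem would be to let $s = \inf A'$ where $A'$ is a suitable "right-heavy" part of $A$, or to iterate the failure assumption to cover $A$ by countably many right-null pieces and derive $\mu(A) = 0$; but the density-theorem argument is cleanest and I would present that. I expect no real obstacle beyond this bookkeeping about membership in $A$.
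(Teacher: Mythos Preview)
Your proof is correct and follows essentially the same route as the paper: both arguments pick a Lebesgue density point $s$ of $A$ lying in $A$ and observe that density $1$ forces $\mu\bigl(A\cap[s,s+\delta]\bigr)>0$ for every $\delta>0$ (the paper phrases this contrapositively, noting that otherwise the density at $s$ would be at most $1/2$). Your explicit treatment of why $s$ can be chosen in $A$ is a useful clarification, and the initial ``by contradiction'' framing is unnecessary since you actually give a direct argument.
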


\begin{proof}
In fact much more is true, since due to the 
Lebesgue Density Theorem 
(see \cite[Theorem 21.29]{vRS82}), the Lebesgue density is 
$ 1 $ almost everywhere on $ A $. That is, 
\[
\lim_{\delta \to 0+} 
\frac{\mu \left( \, 
A \cap \left[ s - \delta, s + \delta \right] \, \right)}
{2 \delta} 
= 1 
\]
holds for almost every $ s \in A $. 
For any such number $ s $ the inequality 
$ \mu \left( \, A \cap 
\left[ s, s+\delta \right] \, \right) > 0 $ must 
be fulfilled for all $ \delta > 0 $, 
otherwise the Lebesgue density in that point could 
be at most $ \frac{1}{2} \,$. 
\end{proof}

We will use this statement in the particular case when 
$ A = \supp \varphi $. 

\begin{coro}\label{coro-positive_measure}
Let 
$ \left( \varphi \,, \psi_1 \,, \psi_2 \right) $ 
be a solution of \eqref{eq-maineq} such that 
the function $ \varphi $ is measurable. 
Then 
$ \mu \left( \supp \varphi \right) > 0 $ 
implies that there exists 
$ s \in \supp \varphi $ such that 
\[
\mu \left( \, \supp \varphi \cap 
\left[ s, s + \delta \right] \, \right) > 0 
\hspace{10 mm} 
\mbox{ for all } \delta > 0. 
\] 
\end{coro}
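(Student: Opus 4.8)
The plan is to deduce this immediately from Proposition~\ref{prop-measure}. The only point that needs a word of justification is that $\supp \varphi$ is a Lebesgue-measurable set: since $\varphi : J \map \RR$ is assumed measurable, and $\RR \setminus \lbrace 0 \rbrace$ is an open (hence Borel) subset of $\RR$, the set
\[
\supp \varphi = \lbrace \, x \in J \ : \ \varphi(x) \neq 0 \, \rbrace = \varphi^{-1} \left( \RR \setminus \lbrace 0 \rbrace \right)
\]
is measurable.

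Having observed this, I would simply invoke Proposition~\ref{prop-measure} with $A := \supp \varphi$. The hypothesis $\mu(\supp \varphi) > 0$ is exactly the hypothesis $\mu(A) > 0$ of the proposition, so it yields a point $s \in A = \supp \varphi$ with $\mu \left( \, A \cap \left[ s, s+\delta \right] \, \right) > 0$ for every $\delta > 0$. Rewriting $A$ as $\supp \varphi$ gives precisely the claimed conclusion, and in particular the point $s$ belongs to $\supp \varphi$ as required.

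There is no real obstacle here; the statement is a direct specialization of the preceding proposition, and the only (entirely routine) step is the measurability of the support noted above.
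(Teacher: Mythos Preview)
Your proposal is correct and follows exactly the paper's approach: the paper states the corollary without a separate proof, merely remarking before it that Proposition~\ref{prop-measure} will be used ``in the particular case when $A = \supp \varphi$.'' Your additional sentence justifying the measurability of $\supp \varphi$ is a small but welcome clarification that the paper leaves implicit.
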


The next lemma will play a crucial role later on. 
It turns out that the measurability provides us a tool 
which helps us to replace the condition for 
the set of zeros of $ \varphi $ utilized in \cite{Kis24}. 
This tool is a classical 
theorem of Steinhaus \cite{Ste20}. 

\begin{lemm}\label{lemm-locally_constant}
Let 
$ \left( \varphi \,, \psi_1 \,, \psi_2 \right) $ 
be a solution of \eqref{eq-maineq}. 
Suppose that $ \varphi $ is measurable, 
and $ s \in J $ is such a point that 
$ \mu \left( \, \supp \varphi \cap 
\left[ s, s + \delta \right] \, \right) > 0 $ 
for every $ \delta > 0 $. 
Then, if $ \emptyset \neq K \subseteq I_1 $ 
is an interval fulfilling 
$ \rfl{K}{s} \subseteq I_2 \, $, then 
there exists $ \lambda \in \RR $ for which 
\[
\psi_1(x) = \psi_2(y) = \lambda 
\qquad \mbox{ holds for all } 
x \in K \mbox{ and } y \in \rfl{K}{s}. 
\]
\end{lemm}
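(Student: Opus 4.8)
The plan is to exploit the functional equation together with the Steinhaus theorem to force $\psi_1$ and $\psi_2$ to be locally constant. First I would fix an arbitrary point $x_0 \in K$ and set $\lambda := \psi_1(x_0)$. The goal is to show $\psi_1(x) = \lambda$ for every $x \in K$ and, symmetrically, $\psi_2(y) = \lambda$ for every $y \in \rfl{K}{s}$. The starting observation is that for each $m \in \supp\varphi$, equation \eqref{eq-maineq} forces $\psi_1(x) = \psi_2(y)$ whenever $x \in I_1$, $y \in I_2$ and $\frac{x+y}{2} = m$, i.e. $y = \rfl{x}{m} = 2m - x$. So the set $\supp\varphi$ links values of $\psi_1$ at a point $x$ to values of $\psi_2$ at its reflection through points of $\supp\varphi$.

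The key step is the following. Fix $x_1 \in K$; I want $\psi_1(x_1) = \psi_1(x_0)$. Consider the set
\[
S_i := \bigl\{ m \in \supp\varphi \ : \ 2m - x_i \in I_2 \bigr\} = \supp\varphi \cap \tfrac{1}{2}\bigl( I_2 + x_i \bigr)
\]
for $i = 0, 1$. Since $x_0, x_1 \in K$ and $\rfl{K}{s} \subseteq I_2$, the point $s$ lies in both $\frac{1}{2}(I_2 + x_0)$ and $\frac{1}{2}(I_2 + x_1)$ (because $2s - x_i \in \rfl{K}{s} \subseteq I_2$), and these are open intervals containing $s$; hence near $s$ the sets $S_0$ and $S_1$ both look like $\supp\varphi$ intersected with a fixed one-sided neighbourhood $[s, s+\delta]$ of positive measure, by the hypothesis on $s$. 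Now I would look at the set of $m$ for which \emph{both} $2m - x_0 \in I_2$ \emph{and} $2m - x_1 \in I_2$ and $m \in \supp\varphi$: this too contains $\supp\varphi \cap [s, s+\delta]$ for $\delta$ small, so it has positive measure. For such $m$ we get
\[
\psi_1(x_0) = \psi_2(2m - x_0) \qquad \text{and} \qquad \psi_1(x_1) = \psi_2(2m - x_1).
\]
To chain these, I need to relate $\psi_2$ at $2m - x_0$ and at $2m - x_1$; the trick is to vary $m$ so that $2m - x_0 = 2m' - x_1$ for another admissible $m' \in \supp\varphi$, i.e. $m' = m + \frac{x_1 - x_0}{2}$. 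This is where Steinhaus enters: the set $E := \{ m \in \RR : m \in \supp\varphi \cap [s, s+\delta] \text{ and } m + \frac{x_1-x_0}{2} \in \supp\varphi \cap [s, s+\delta'] \}$ is, up to translation, an intersection of a positive-measure set with a translate of a positive-measure set; by the Steinhaus theorem the difference set $(\supp\varphi)-(\supp\varphi)$ contains an interval around $0$, and more carefully one arranges that $E$ is nonempty provided $|x_1 - x_0|$ is small enough (say $x_0, x_1$ both lie in a subinterval $K'$ of $K$ of length below the Steinhaus radius, with $\rfl{K'}{s}$ still inside $I_2$). Picking $m \in E$ and $m' = m + \frac{x_1-x_0}{2} \in \supp\varphi$, both reflections stay in $I_2$, and
\[
\psi_1(x_1) = \psi_2(2m' - x_1) = \psi_2(2m - x_0) = \psi_1(x_0),
\]
which is the desired equality on $K'$.

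This gives local constancy of $\psi_1$ on small subintervals of $K$; a standard connectedness argument then upgrades it to constancy on all of $K$ (the relation ``$\psi_1$ is constant near'' partitions $K$ into relatively open pieces, so one of them is all of $K$), and then \eqref{eq-maineq} at any single $m \in \supp\varphi \cap [s, s+\delta]$ transfers the common value $\lambda$ to $\psi_2$ on $\rfl{K}{s}$, since every $y \in \rfl{K}{s}$ is of the form $2m - x$ with $x \in K$ for a suitable $m$ near $s$ (again using that $s$ is in the interior of the relevant reflected interval and that $\supp\varphi$ accumulates there with positive measure, so such an $m$ exists in $\supp\varphi$). The main obstacle I anticipate is the bookkeeping in the Steinhaus step: making sure that the various reflected points $2m - x_i$ genuinely land in $I_2$ simultaneously, which forces one to work on a subinterval $K' \subseteq K$ whose length is controlled both by the Steinhaus interval coming from $\supp\varphi \cap [s,s+\delta]$ and by the constraint $\rfl{K'}{s} \subseteq I_2$; once the sizes are chosen consistently the argument is routine, but the bound-chasing deserves care.
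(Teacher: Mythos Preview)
Your approach is essentially the paper's: both hinge on Steinhaus applied to $\supp\varphi \cap [s,s+\delta]$ to produce $s_1,s_2 \in \supp\varphi$ with a prescribed small gap $h=s_2-s_1$, after which the double reflection $x \mapsto 2s_2-(2s_1-x)=x+2h$ propagates the value of $\psi_1$. The paper iterates this step $m$ times (choosing $h=(y-x)/2m$ small enough) to connect any two points $x,y\in K$ directly; you do a single step to get local constancy and then invoke connectedness of $K$. These are equivalent reorganisations of the same idea.

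One point to tighten: your transfer to $\psi_2$ via ``every $y\in\rfl{K}{s}$ equals $2m-x$ for some $x\in K$ and some $m\in\supp\varphi$ near $s$'' can break at the left endpoint of $\rfl{K}{s}$ when $K$ attains its supremum $b$, because the one-sided hypothesis only supplies $m\geq s$, forcing $2m-y = b + 2(m-s) \geq b$, which need not lie in $K$. The paper avoids this by symmetry: since $(\varphi,\psi_2,\psi_1)$ is also a solution of \eqref{eq-maineq} with the roles of $I_1,I_2$ interchanged, and $\rfl{\rfl{K}{s}}{s}=K$, the first half of the argument applied verbatim to $\rfl{K}{s}$ shows $\psi_2$ is constant there with some value $\nu$; a single interior linking equality $\psi_1(x_0)=\psi_2(2s_1-x_0)$ (with $s_1\in\supp\varphi$) then gives $\lambda=\nu$. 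You should adopt that symmetry step rather than the direct transfer.
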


\begin{proof}
Let 
$ x,y \in K $ be arbitrary numbers, 
fulfilling $ x < y $. 
Now 
\[ 
\rfl{ \, [x,y] \, }{s} = 
\left[ 2s-y, 2s-x \right] 
\subset \rfl{K}{s} \subseteq I_2 \,, 
\]
using the definition of the reflection operator, 
and the assumption for $ K $. 
Considering that $ I_2 $ is open, 
there exists $ d > 0 $ such that 
$ 2s - x + 2d \in I_2 \,$, so 
\[
\rfl{ \, [x,y] \, }{ \, [s, s+d] \, } = 
\left[ 2s-y , 2s + 2d -x \right] \subseteq I_2 
\]
holds. 
On the other hand, we have supposed 
$ \mu \left( \, \supp \varphi \cap 
\left[ s, s + d \right] \, \right) > 0 $. 
Therefore, applying the 
theorem of Steinhaus, the difference set 
\[
\Delta := \lbrace \, t_1 - t_2 \ : \ 
t_1 \,, t_2 \in \supp \varphi \cap [s,s+d] \, \rbrace
\] 
contains an open neighborhood of $ 0 $, 
i.e. there exists 
$ \varepsilon > 0 $ such that 
$ \left] - \varepsilon , \varepsilon 
\right[ \subseteq \Delta $. 
Now let $ m \in \NN $ be such that 
\[
h := \frac{y-x}{2m} \, \in \, 
\left] \, 0, \varepsilon \, \right[ 
\, \mbox{ is fulfilled}.
\]
Then there exist 
$ s_1 \,, s_2 \in \supp \varphi \cap [s, s+d] $ 
such that 
$ s_2 = s_1 + h $. 
Let us introduce new notations 
for some particular points: 
\[
x_j := x + j \cdot 2h \,, \qquad 
\widehat{x}_j := \rfl{x_j}{s_1} \ \mbox{ and } \ 
\widehat{\widehat{x}}_j := \rfl{\widehat{x}_j}{s_2} 
\qquad (j = 0,1, \dots , m).
\]
Obviously, 
$ x_0 = x $, $ x_m = y $ and $ x_j \in [x,y] $. 
Therefore 
$ \widehat{x}_j \in 
\rfl{\, [x,y] \,}{\, [s,s+d] \,} \subseteq I_2 $ 
for $ j=0,1, \dots ,m $ while 
\[
\widehat{\widehat{x}}_j = 2s_2 - \widehat{x}_j = 
2s_2 - 2s_1 + x_j = x_j + 2h = x_{j+1} \in I_1 
\qquad \mbox{for } j = 0,1, \dots , m-1. 
\] 
At this point let us utilize the fact that 
$ \left( \varphi \,, \psi_1 \,, \psi_2 \right) $ 
is a solution of \eqref{eq-maineq}. 
In particular, 
\begin{align*}
\varphi \left( 
\frac{x_j + \widehat{x}_j}{2} \right) 
\bigl( 
\psi_1 (x_j) - \psi_2(\widehat{x}_j) 
\bigr) &= 0 
\qquad (j = 0,1, \dots m) \ \mbox{ and } \\ 
\varphi \left( 
\frac{\widehat{\widehat{x}}_j + \widehat{x}_j}{2} \right) 
\bigl( 
\psi_1 (\widehat{\widehat{x}}_j) - \psi_2(\widehat{x}_j) 
\bigr) &= 0 
\qquad (j = 0,1, \dots m-1), 
\end{align*} 
which is equivalent to 
\begin{align*} 
\varphi \left( s_1 \right) 
\bigl( 
\psi_1 (x_j) - \psi_2(\widehat{x}_j) 
\bigr) &= 0 
\qquad (j = 0,1, \dots m) \ \mbox{ and } \\ 
\varphi \left( s_2 \right) 
\bigl( 
\psi_1 (x_{j+1}) - \psi_2(\widehat{x}_j) 
\bigr) &= 0 
\qquad (j = 0,1, \dots m-1).  
\end{align*} 
As $ s_1 \,, s_2 \in \supp \varphi $, 
these equations imply 
\[ 
\psi_1(x_j) = \psi_2( \widehat{x}_j ) = \psi_1(x_{j+1}) 
\qquad \mbox{ for every } 
j = 0,1, \dots ,m-1.
\]
Therefore 
$ \psi_1(x) = \psi_1 (x_0) = \psi_1 (x_1) = \dots = 
\psi_1 (x_m) = \psi_1(y) $ 
holds as well. 
Since $ x,y \in K $ were arbitrary, it follows 
that there exists 
$ \lambda \in \RR $ such that 
$ \psi_1(x) = \lambda$  for all $ x \in K $. 

Finally, we have to verify that 
$ \psi_2 $ is also constant on $ \rfl{K}{s} $ 
with value $ \lambda $. 
For this purpose let observe that 
$ \left( \varphi \,, \psi_1 \,, \psi_2 \right) $ 
is a solution of \eqref{eq-maineq} if, 
and only if, 
$ \left( \varphi \,, \psi_2 \,, \psi_1 \right) $ 
is a solution. 
Thus let us apply the same proof we have just carried out, 
but now for the solution 
$ \left( \varphi \,, \psi_2 \,, \psi_1 \right) $, 
and with the starting interval 
$ \rfl{K}{s} \subseteq I_2 \,$. 
Since 
$ \rfl{\rfl{K}{s}}{s} = K \subseteq I_1 \,$, 
the assumptions of the lemma are fulfilled 
for the solution 
$ \left( \varphi \,, \psi_2 \,, \psi_1 \right) $. 
Hence, according to the first part of our proof, 
there exists $ \nu \in \RR $ such that 
$ \psi_2(y) = \nu $ for all 
$ y \in \rfl{K}{s} $. 
But, for the 
points 
$ x_0 = x \in K $ and 
$ \widehat{x}_0 \in \rfl{K}{s}$ 
introduced before, we had 
\[
\lambda = \psi_1(x_0) = \psi_2(\widehat{x}_0) = \nu \,, 
\]
which completes the proof. 
\end{proof}

The next proposition claims that nontrivial solutions 
are locally constant on some ends of their domains. 

\begin{prop}\label{prop_end_constant}
Let 
$ \left( \varphi \,, \psi_1 \,, \psi_2 \right) $ 
be a solution of \eqref{eq-maineq} such that 
$ \varphi $ is measurable. 
Suppose that there exist 
$ \gamma_1 \in I_1 \,$, 
$ \gamma_2 \in I_2 $ such that for 
$ s = \frac{\gamma_1 + \gamma_2}{2} \in J $ 
it holds that 
\[
\mu \left( \supp \varphi \cap 
\left[ s, s + \delta \right] \right) > 0 
\hspace{10 mm} 
\mbox{ for all } \delta > 0. 
\] 
Then exactly one of the following three cases holds. 
\begin{itemize}
\item[Case 1.] 
There exist $ \lambda \in \RR $ and 
nonempty open intervals 
$ P_i \subsetneqq I_i $ such that 
$ \gamma_i \in P_i $ and $ \inf P_i = \inf I_i $ 
($ i=1,2 $) 
moreover 
\[
\psi_1(x) = \psi_2(y) = \lambda 
\qquad \mbox{ holds for all } 
x \in P_1 \mbox{ and } y \in P_2 \,. 
\]
\item[Case 2.]
There exist $ \lambda \in \RR $ and 
nonempty open intervals 
$ Q_i \subsetneqq I_i $ 
such that 
$ \gamma_i \in Q_i $ and $ \sup Q_i = \sup I_i $ 
($ i=1,2 $) moreover 
\[
\psi_1(x) = \psi_2(y) = \lambda 
\qquad \mbox{ holds for all } 
x \in Q_1 \mbox{ and } y \in Q_2 \,. 
\]
\item[Case 3.]
$ \psi_1 $ is constant on $ I_1 $ or 
$ \psi_2 $ is constant on $ I_2 $. 
\end{itemize}
\end{prop}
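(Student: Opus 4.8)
The plan is to feed Lemma~\ref{lemm-locally_constant} the largest subinterval of $I_1$ whose reflection about $s$ still lies inside $I_2$, and then to decide which of the three cases occurs by locating the endpoints of that interval. Concretely, I would set $K := I_1 \cap \rfl{I_2}{s}$, so that $\rfl{K}{s} = \rfl{I_1}{s} \cap I_2$ (recall $\rfl{A}{s} = 2s-A$, which is an involution distributing over intersections). Since $s = \tfrac12(\gamma_1+\gamma_2)$ we have $\gamma_1 = \rfl{\gamma_2}{s} \in \rfl{I_2}{s}$ and $\gamma_1\in I_1$, hence $\gamma_1\in K$, and symmetrically $\gamma_2\in\rfl{K}{s}$; thus $K$ and $\rfl{K}{s}$ are nonempty open intervals and $\rfl{K}{s}\subseteq I_2$ by construction. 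As $s\in J$ and $\mu\!\left(\supp\varphi\cap[s,s+\delta]\right)>0$ for every $\delta>0$, Lemma~\ref{lemm-locally_constant} applies with this $K$ and produces a constant $\lambda\in\RR$ with $\psi_1\equiv\lambda$ on $K$ and $\psi_2\equiv\lambda$ on $\rfl{K}{s}$.

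Next I would locate the endpoints of $K$. Writing $I_i=(a_i,b_i)$ with the usual conventions for $\pm\infty$, one has $\inf K=\max\{a_1,\rfl{b_2}{s}\}$ and $\sup K=\min\{b_1,\rfl{a_2}{s}\}$. If $K=I_1$ then $\psi_1$ is constant on $I_1$, and if $\rfl{K}{s}=I_2$ then $\psi_2$ is constant on $I_2$; in either situation we are in Case~3, so I may assume $K\subsetneqq I_1$ and $\rfl{K}{s}\subsetneqq I_2$. The key point is that now $K$ cannot avoid both endpoints of $I_1$: if $\inf K>a_1$ then the maximum defining $\inf K$ is attained at $\rfl{b_2}{s}$, and if $\sup K<b_1$ then the minimum defining $\sup K$ is attained at $\rfl{a_2}{s}$, so having both strict inequalities would give $K=(\rfl{b_2}{s},\rfl{a_2}{s})=\rfl{I_2}{s}$ and hence $\rfl{K}{s}=I_2$, contrary to assumption. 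Therefore exactly one of ``$\inf K=a_1$ and $\sup K<b_1$'' or ``$\sup K=b_1$ and $\inf K>a_1$'' holds.

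In the first case $\sup K=\rfl{a_2}{s}$, so $\inf\rfl{K}{s}=\rfl{\sup K}{s}=a_2$, and since $\rfl{K}{s}\subsetneqq I_2$ has left endpoint $a_2$ its right endpoint is $<b_2$; thus $P_1:=K$ and $P_2:=\rfl{K}{s}$ are nonempty open intervals with $P_i\subsetneqq I_i$, $\gamma_i\in P_i$, $\inf P_i=\inf I_i$, and $\psi_1\equiv\psi_2\equiv\lambda$ on them, which is Case~1. In the second case the mirror-image computation gives $\inf K=\rfl{b_2}{s}$, $\sup\rfl{K}{s}=b_2$, $\inf\rfl{K}{s}>a_2$, and $Q_1:=K$, $Q_2:=\rfl{K}{s}$ realize Case~2. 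The three alternatives produced this way are mutually exclusive: Case~3 arises exactly when $K=I_1$ or $\rfl{K}{s}=I_2$, which is excluded before the dichotomy, while within the dichotomy Case~1 forces $\sup K<\sup I_1$ and Case~2 forces $\sup K=\sup I_1$.

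The step I expect to cost the most care is the endpoint analysis of the previous two paragraphs — in particular the observation that $K$ cannot be interior to $I_1$ at both ends without forcing $\rfl{K}{s}=I_2$ — together with checking that all of this, including the meaning of ``$\inf P_i=\inf I_i$'', behaves correctly when some of the $a_i,b_i$ are infinite. Once the right interval $K$ is identified, the rest is a direct application of Lemma~\ref{lemm-locally_constant} and the elementary algebra of the reflection operator $\rfl{\cdot}{s}$.
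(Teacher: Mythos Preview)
Your proof is correct and follows essentially the same route as the paper: your interval $K = I_1 \cap \rfl{I_2}{s}$ is exactly the paper's $C_1$, your $\rfl{K}{s}$ is its $C_2$, and both arguments apply Lemma~\ref{lemm-locally_constant} to this pair and then perform the same endpoint analysis to separate the three cases. Your justification of ``exactly one'' via ``Case~3 arises exactly when $K=I_1$ or $\rfl{K}{s}=I_2$'' overstates things slightly (Case~3 could in principle hold for other reasons), but the paper's own proof does not address mutual exclusivity any more carefully.
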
 

\begin{proof}
Let us define the sets 
\[
C_1 := \lbrace \, 
x \in I_1 \, : \, \rfl{x}{s} \in I_2 
\, \rbrace 
\hspace{5mm} \mbox{ and } \hspace{5mm} 
C_2 := \lbrace \, 
y \in I_2 \, : \, \rfl{y}{s} \in I_1 
\, \rbrace. 
\]
Since $ I_1 \,$, $ I_2 $ are open intervals, 
it is easy to see that 
$ C_1 \,$, $ C_2 $ are also open intervals and, 
clearly, 
$ \gamma_1 \in C_1 \,$, $ \gamma_2 \in C_2 \,$. 
We denote the endpoints of the intervals as 
\[
\alpha_i = \inf C_i \in \RRx 
\hspace{5mm} \mbox{ and } \hspace{5mm} \ 
\beta_i = \sup C_i \in \RRx 
\qquad (i=1,2). 
\]
From the definition of $ C_1 $ and $ C_2 $ it is 
straightforward that $ \rfl{C_1}{s} = C_2 $ and 
$ \rfl{C_2}{s} = C_1 \,$. 
In particular, 
$ \rfl{\alpha_1}{s} = \beta_2 $ and 
$ \rfl{\beta_1}{s} = \alpha_2 \,$, using the 
convention that 
$ \rfl{+ \infty}{s} = - \infty $ and 
$ \rfl{- \infty}{s} = + \infty $. 
 
We point out that that 
$ \inf I_1 < \alpha_1 $ and $ \sup I_2 > \beta_2 $ 
cannot occur simultaneously. 
Assume, on the contrary, that this is not the case. 
Firstly, this would imply 
$ \alpha , \beta \in \RR $. 
Now, since $ I_1 \,$, $ I_2 $ are open and 
$ \rfl{\alpha_1}{s} = \beta_2 \,$, 
there exists $ \varepsilon > 0 $ such that 
\[
x_1 := \alpha_1 - \varepsilon \in I_1 
\ \mbox{ and } \ 
\rfl{x_1}{s} = 2s - \alpha_1 + \varepsilon = 
\beta_2 + \varepsilon \in I_2 \,, 
\]
so 
$ x_1 \in C_1 $ while 
$ x_1 < \alpha_1 = \inf C_1 $ 
which is a contradiction. 
For similar reasons, 
$ \inf I_2 < \alpha_2 $ and $ \sup I_1 > \beta_1 $ 
cannot occur simultaneously, either. 

Finally, recalling 
$ \rfl{C_1}{s} = C_2 $ and using 
Lemma \ref{lemm-locally_constant}, 
there exists a constant $ \lambda \in \RR $ 
such that 
\[
\psi_1(x) = \psi_2(y) = \lambda
\qquad \mbox{ holds for all } 
x \in C_1 \mbox{ and } y \in C_2 \,. 
\]
Now there are three possibilities. 
\begin{enumerate}
\item[(i)] 
$ C_i = I_i $ for at least one 
$ i \in \lbrace 1,2 \rbrace $, 
which yields Case 3. of the statement. 
\item[(ii)] 
$ \alpha_1 = \inf I_1 $ but 
$ \beta_1 < \sup I_1 \,$. 
We have seen that the latter inequality implies 
$ \alpha_2 = \inf I_2 \,$. 
Now $ \beta_2 = \sup I_2 $ would 
result in the previous case with 
$ C_2 = I_2 \,$, so we consider 
$ \beta_2 < \sup I_2 \,$. 
Using Lemma \ref{lemm-locally_constant} 
we get Case 1. of our statement, 
since $ \psi_1 $ and $ \psi_2 $ 
are constant functions with the same value 
$ \lambda $ on 
\[
P_1 := C_1 = \left] \, \alpha_1 \,, \beta_1 \, \right[ 
\subsetneqq I_1 
\hspace{3mm} \mbox{ and } \hspace{3mm} 
P_2 := C_2 = \left] \, \alpha_2 \,, \beta_2 \, \right[ 
\subsetneqq I_2 \,, 
\mbox{ respectively.}
\] 
\item[(iii)] 
$ \inf I_1 < \alpha_1 \,$, which implies 
$ \sup I_2 = \beta_2 \,$. 
Again, to avoid $ C_2 = I_2 \,$, 
we restrict ourselves to 
$ \inf I_2 < \alpha_2 $ and thus 
$ \sup I_1 = \beta_1 \,$. 
This way we get Case 2. of the proposition, 
because Lemma \ref{lemm-locally_constant} 
ensures that 
$ \psi_1 $ and $ \psi_2 $ 
are constant functions with the same value 
$ \lambda $ on the intervals 
\[
Q_1 := C_1 = \left] \, \alpha_1 \,, \beta_1 \, \right[ 
\subsetneqq I_1 
\hspace{3mm} \mbox{ and } \hspace{3mm} 
Q_2 := C_2 = \left] \, \alpha_2 \,, \beta_2 \, \right[ 
\subsetneqq I_2 \,, 
\mbox{ respectively.} 
\]
\end{enumerate}
\end{proof}

Now we have all the auxiliary tools to describe the 
structure of the possible solutions of \eqref{eq-maineq} 
under the assumption that $\varphi$ is measurable. 
In the next theorems some open intervals of the form 
$ \left] \, \alpha , \beta \, \right[ \subseteq \RR $ 
will appear where 
$ \alpha, \beta \in \RRx $ and $ \alpha \leq \beta $. 
We will use the convention that if 
$ \alpha = \beta $ then automatically 
$ \left] \, \alpha , \beta \, \right[ = \emptyset \,$ 
even if $ \alpha, \beta $ are extended real numbers. 

\begin{thm}\label{thm-main_measurable}
Let 
$ \left( \varphi \,, \psi_1 \,, \psi_2 \right) $ 
be a solution of \eqref{eq-maineq} 
and suppose that 
$ \varphi : J \map \RR $  is measurable. 
Then exactly one of the following statements is true. 
\begin{enumerate} 
\item 
$ \mu \left( \supp \varphi \right) = 0 $ 
or there exists 
$ \lambda \in \RR $ such that 
$ \psi_1(x) = \psi_2(y) = \lambda $ 
for all 
$ x \in I_1 $ and $ y \in I_2 \,$. 
\item 
There exist constants 
$ \lambda, \nu \in \RR $ 
and, for $ i = 1,2 $, there exist 
$ a_i \,, b_i \in \RRx $ 
such that 
$ \inf I_i \leq a_i \leq b_i \leq \sup I_i $ 
and for the open intervals 
\[
U_i := \left] \, \inf I_i \,, a_i \, \right[ 
\qquad \mbox{ and } \qquad 
V_i := \left] \, b_i \,, \sup I_i \, \right[ 
\]
the following assertions are fulfilled: 
\begin{itemize}
\item 
$ U_i \neq I_i $ and $ V_i \neq I_i $ for $ i = 1,2 $ ; 
\item 
$ U_1 \,, U_2 $ are both nonempty or 
$ V_1 \,, V_2 $ are both nonempty; 
%(possibly $U_1 \,, U_2 \,, V_1 \,, V_2$ are all nonempty);
\item 
$ \psi_1 (x) =  \psi_2 (y) = \lambda $ for all 
$ (x,y) \in U_1 \times U_2 $ and 
$ \psi_1 (x) =  \psi_2 (y) = \nu $ for all 
$ (x,y) \in V_1 \times V_2 $ ; 
\item  
\[
\mu \left( \left( 
\frac{1}{2} \left( K_1 + I_2 \right) \cup 
\frac{1}{2} \left( I_1 + K_2 \right) \right)
\cap \supp  \varphi \right) = 0 
\] 
where 
$ K_i := I_i \setminus (U_i \cup V_i) $ 
is a nonempty subinterval 
which is closed in $ I_i $ ($ i=1,2 $). 
\end{itemize}

\item There exist a constant 
$ \lambda \in \RR $ and an index 
$ j \in \lbrace 1,2 \rbrace $ such that 
$ \psi_j(z) = \lambda $ for all $ z \in I_j \,$, 
while there exists a finite or countable collection of 
disjoint, nonempty open intervals 
$ U_n  \subset I_i $ 
($ i \neq j $ and $ n = 1, \dots, N $ 
where $ N \in \NN $ or $ N = \infty $) 
with the following properties: 
\begin{align*}
& \bigcup_{n=1}^N U_n \neq I_i 
\ \mbox{ and } \ \psi_i(z) = \lambda 
\ \mbox{ for all } \ z \in \bigcup_{n=1}^N U_n \,, 
\mbox{ moreover } \\  
& \varphi(t) = 0 \ \mbox{ for all } \ 
t \in \frac{1}{2} \left( 
\left( I_i \setminus \bigcup_{n=1}^N U_n \right) + I_j 
\right).  
\end{align*}
\end{enumerate} 
\end{thm}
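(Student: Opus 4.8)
The plan is to dispose of the case $\mu(\supp\varphi)=0$ at once, since statement~(1) then holds verbatim, and, assuming $\mu(\supp\varphi)>0$, to feed everything through Proposition~\ref{prop_end_constant}. First I would introduce the set $\Phi$ of those $s\in\supp\varphi$ at which $\supp\varphi$ has Lebesgue density~$1$; by the density theorem $\mu(\supp\varphi\setminus\Phi)=0$, so $\mu(\Phi)>0$, and --- exactly as in the proof of Proposition~\ref{prop-measure} --- every $s\in\Phi$ fulfils $\mu(\supp\varphi\cap[s,s+\delta])>0$ for all $\delta>0$. Since $\Phi\subseteq\supp\varphi\subseteq J$, each such $s$ is of the form $s=\frac{\gamma_1+\gamma_2}{2}$ with $\gamma_i\in I_i$, so Lemma~\ref{lemm-locally_constant} and Proposition~\ref{prop_end_constant} apply at every point of $\Phi$. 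I would fix one $s_0\in\Phi$ and then split according to whether or not one of $\psi_1,\psi_2$ is constant on the whole of its domain.

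In the case where one of them, say $\psi_1$, equals a constant $\lambda$ on $I_1$ (the other alternative being symmetric, because $(\varphi,\psi_1,\psi_2)$ solves \eqref{eq-maineq} on $(I_1,I_2)$ iff $(\varphi,\psi_2,\psi_1)$ does on $(I_2,I_1)$), I would put $R:=\rfl{I_1}{\supp\varphi}\cap I_2=\bigcup_{t\in\supp\varphi}\bigl(\rfl{I_1}{t}\cap I_2\bigr)$, which is open as a union of open intervals and is nonempty since $\supp\varphi\neq\emptyset$. For $y\in R$, choosing $t\in\supp\varphi$ and $x\in I_1$ with $\frac{x+y}{2}=t$ and using $\varphi(t)\neq0$ in \eqref{eq-maineq} gives $\psi_2(y)=\psi_1(x)=\lambda$; thus $\psi_2\equiv\lambda$ on $R$. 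On the other hand, for $y\in I_2\setminus R$ the midpoint $\frac{x+y}{2}$ misses $\supp\varphi$ for every $x\in I_1$, i.e. $\varphi$ vanishes on $\frac{1}{2}\bigl((I_2\setminus R)+I_1\bigr)$. Writing $R$ as a countable disjoint union of nonempty open intervals $U_n$, this yields statement~(3) with $j=1$, $i=2$ when $R\neq I_2$, and statement~(1) when $R=I_2$.

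In the remaining case $\psi_1,\psi_2$ are both non-constant, so at each $s\in\Phi$ Proposition~\ref{prop_end_constant} can only produce its Case~1 or its Case~2. I would then let $U_i$ (resp. $V_i$) be the union, over all $s\in\Phi$ producing Case~1 (resp. Case~2), of the corresponding interval $P_i^s$ (resp. $Q_i^s$), for $i=1,2$. Because the $P_i^s$ all have left endpoint $\inf I_i$ they pairwise overlap, so $\psi_i$ is constant, with a single value $\lambda$, on the open interval $U_i=\,]\inf I_i,a_i[\,$; symmetrically $\psi_i\equiv\nu$ on $V_i=\,]b_i,\sup I_i[\,$ (taking $\lambda$, resp. $\nu$, arbitrary in $\RR$ if the relevant union is empty). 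Then $\psi_1=\psi_2=\lambda$ on $U_1\times U_2$ and $\psi_1=\psi_2=\nu$ on $V_1\times V_2$; non-constancy gives $U_i\neq I_i$ and $V_i\neq I_i$; and $U_1,U_2$ are both nonempty or $V_1,V_2$ are both nonempty, depending on which case $s_0$ falls into. To see that $K_i:=I_i\setminus(U_i\cup V_i)=I_i\cap[a_i,b_i]$ is a nonempty subinterval of $I_i$ closed in $I_i$, I would argue that $a_i>b_i$ would make $U_i$ and $V_i$ overlap, forcing $\lambda=\nu$ and hence $\psi_i$ constant on $U_i\cup V_i=I_i$, a contradiction.

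The last step --- and the one I expect to be the real obstacle --- is the measure-zero condition $\mu \left( \left( \tfrac{1}{2} (K_1 + I_2) \cup \tfrac{1}{2} (I_1 + K_2) \right) \cap \supp\varphi \right) = 0$. I would prove it by contradiction: if $\mu\bigl(\supp\varphi\cap\tfrac{1}{2}(K_1+I_2)\bigr)>0$, then, since $\mu(\supp\varphi\setminus\Phi)=0$, there is a density point $s\in\Phi\cap\tfrac{1}{2}(K_1+I_2)$, say $s=\frac{\gamma_1+\gamma_2}{2}$ with $\gamma_1\in K_1$ and $\gamma_2\in I_2$; but whichever of its two cases Proposition~\ref{prop_end_constant} produces at $s$, it places $\gamma_1$ into $U_1$ or into $V_1$, which contradicts $\gamma_1\in K_1$ by the very maximality built into the definitions of $U_1,V_1$. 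The symmetric argument handles $\tfrac{1}{2}(I_1+K_2)$, and then statement~(2) holds; the mutual exclusivity of the three statements will be transparent from the constructions. The genuine difficulty here is not analytic --- the measure theory used (Steinhaus and Lebesgue density) is already present in the excerpt --- but organisational: one must set up $\Phi$, $U_i$, $V_i$, $K_i$ in exactly the right way so that ``$\supp\varphi$ has positive measure in a region'' can be converted, via a density point of $\supp\varphi$ together with Proposition~\ref{prop_end_constant}, into a clash with maximality.
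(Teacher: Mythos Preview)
Your proof is correct and follows the paper's strategy closely: both reduce to $\mu(\supp\varphi)>0$, invoke Lebesgue density to feed Proposition~\ref{prop_end_constant}, split on whether one of $\psi_1,\psi_2$ is globally constant, handle case~(3) via the open set $\rfl{I_j}{\supp\varphi}\cap I_i$, and in the non-constant branch obtain the measure-zero condition by contradiction, applying Proposition~\ref{prop_end_constant} at a density point of $\supp\varphi$ lying in $\tfrac12(K_1+I_2)$. The one organisational difference is your definition of $U_i,V_i$ as unions $\bigcup_s P_i^s$, $\bigcup_s Q_i^s$ over all density points $s\in\Phi$, whereas the paper defines them intrinsically as the maximal initial and terminal subintervals on which $\psi_i$ is constant (via auxiliary sets $A_i(\lambda),B_i(\lambda)$); your choice makes the inclusion $P_1^s\subseteq U_1$ tautological in the contradiction step, while the paper's choice derives the same inclusion from maximality. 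The two definitions need not coincide, but either suffices for the statement.
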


\begin{proof}
It is easy to see that the first case can indeed occur, 
for instance when the solution is trivial. 
However, from now on, assume that \emph{(1)} 
does not hold. 
In particular, 
$ \mu ( \supp \varphi ) > 0 $, 
so we may use Corollary \ref{coro-positive_measure} 
to see that the assumptions of 
Proposition \ref{prop_end_constant} are fulfilled. 

We start our investigations in the situation when 
Case 1. or Case 2. holds in 
Proposition \ref{prop_end_constant}. 
That is, neither $ \psi_1 $ nor $ \psi_2 $ is constant. 
For any $ \lambda \in \RR $ let us define the sets 
\begin{align*}
A_i \left( \lambda \right) & := 
\lbrace \, 
t \in I_i \ : \ \psi_i(z) = \lambda 
\mbox{ for all } z \in \, 
\left] \,  \inf I_i \,, t \, \right[ \, 
\rbrace 
\hspace{5mm} \mbox{ and } \\ 
B_i \left( \lambda \right) & := 
\lbrace \, 
t \in I_i \ : \ \psi_i(z) = \lambda 
\mbox{ for all } z \in \, 
\left] \, t , \sup I_i \, \right[ \, 
\rbrace 
\hspace{10mm} (i=1,2). 
\end{align*} 
Obviously, such a set can be empty 
but when it is not empty 
then it is an interval.
Furthermore, if 
$ A_i( \lambda ) \neq \emptyset $ 
(or, respectively, $ B_i(\lambda) \neq \emptyset $) 
for some 
$ \lambda \in \RR $ then 
$ A_i( \kappa ) = \emptyset $ 
(respectively, $ B_i( \kappa ) = \emptyset $), 
for every $ \kappa \neq \lambda $. 
Since $ \psi_1 \,, \psi_2 $ are not constant functions, 
$ A_i (\lambda) \neq I_i $ and 
$ B_i (\lambda) \neq I_i $ for $ i = 1,2 $ and 
for all $ \lambda \in \RR $. 
Now we will determine the sets 
$ U_i \,, V_i $ appearing in 
case \emph{(2)} of our theorem. 
Let 
\[
U_i := 
\begin{cases}
\left] \, \inf I_i \,, \sup A_i(\lambda) \, \right[ 
\ & \mbox{ if there exists } \lambda \in \RR 
\mbox{ such that } A_i(\lambda) \neq \emptyset \\ 
\emptyset 
\ & \mbox{ if } A_i(\lambda) = \emptyset 
\ \mbox{ for all } \lambda \in \RR 
\end{cases} 
\hspace{10 mm} (i=1,2). 
\]
Similarly, let 
\[
V_i := 
\begin{cases}
\left] \, \inf B_i(\nu) \,, \sup I_i \, \right[ 
\ & \mbox{ if there exists } \nu \in \RR 
\ \mbox{ such that } B_i(\nu) \neq \emptyset \\ 
\emptyset 
\ & \mbox{ if } B_i(\nu) = \emptyset 
\ \mbox{ for all } \nu \in \RR 
\end{cases} 
\hspace{10 mm} (i=1,2). 
\]
According to the previous observation 
on the uniqueness of the constants 
$ \lambda, \nu $, these sets are well-defined. 
Due to Proposition \ref{prop_end_constant}, 
$ U_1 \neq \emptyset $ and $ U_2 \neq \emptyset $, or 
$ V_1 \neq \emptyset $ and $ V_2 \neq \emptyset $ 
surely holds (possibly at the same time). 
The definition of $ U_i \,, V_i $ guarantees that 
$ \psi_i $ is constant $ \lambda $ on $ U_i $ and 
$ \psi_i $ is constant $ \nu $ on $ V_i $ for 
$ i = 1,2 $. 

To conclude case \emph{(2)} 
we have to show that 
\[
\mu \left( \left( 
\frac{1}{2} \left( K_1 + I_2 \right) \cup 
\frac{1}{2} \left( I_1 + K_2 \right) \right)
\cap \supp  \varphi \right) = 0 
\]
Assume that the contrary is true, thus either 
\[ 
\mu \left( 
\frac{1}{2} \left( K_1 + I_2 \right) \cap 
\supp \varphi \right) > 0 
\hspace{5mm} \mbox{ or } \hspace{5mm} 
\mu \left( 
\frac{1}{2} \left( I_1 + K_2 \right) \cap 
\supp \varphi \right) > 0 
\]
holds. We only discuss the situation when 
$ \mu \left( 
\frac{1}{2} \left( K_1 + I_2 \right) \cap 
\supp \varphi \right) > 0 $. 
From the other one we could derive a contradiction analogously. 
In this case, using 
Proposition \ref{prop-measure}, there exists 
$ s_0 \in \frac{1}{2} \left( K_1 + I_2 \right) 
\cap \supp \varphi $ 
such that 
\[
\mu \left( 
\frac{1}{2} \left( K_1 + I_2 \right) 
\cap \supp \varphi \cap 
\left[ \, s_0 \,, s_0 + \delta \, \right] \right) > 0 
\hspace{10 mm} 
\mbox{ for all } \delta > 0. 
\]
That is, there exist 
$ c_1 \in K_1 $ and $ c_2 \in I_2 $ such that 
$ s_0 = \frac{1}{2} \left( c_1 + c_2 \right) $. 
Hence the assumptions of Proposition \ref{prop_end_constant} 
are fulfilled for the solution, but 
now with the points 
$ c_1 \in K_1 \,$, $ c_2 \in I_2 $ and $ s_0 \in J $. 
As we have assumed that neither of 
$ \psi_1 $ and $ \psi_2 $ is constant, 
necessarily Case 1. or Case 2. is relevant 
in Proposition \ref{prop_end_constant}. 
Either way, $ \psi_1 $ is constant on 
an interval that contains $ c_1 $ and 
extends until one of the endpoints of $ I_1 \,$. 
Consequently, 
$ c_1 \in U_1 $ or $ c_1 \in V_1 $ is fulfilled 
which contradicts the fact that 
$ c_1 \in K_1 = 
I_1 \setminus \left( U_1 \cup V_1 \right) $. 

Finally, from the definitions of $ U_i $ and $ V_i \,$, 
together with the condition that neither 
$ \psi_1 $ nor $ \psi_2 $ is constant, 
we get that $ K_i \neq \emptyset $. 
Moreover, as it is obtained after the 
removal of open intervals 
$ U_i $, $ V_i $ from the two ends of 
$ I_i $, it follows that 
$ K_i $ is closed in $ I_i $ ( $ i=1,2 $). 
Thus we have verified that 
$ U_1 \,, U_2 \,, V_1 \,, V_2 \,, K_1 \,, K_2 $ 
indeed possess the properties 
assured in case \emph{(2)} of our theorem. 

The only remaining situation is when there exist 
$ \lambda \in \RR $ and an index 
$ j \in \lbrace 1,2 \rbrace $ such that 
$ \psi_j(z) = \lambda $ for all 
$ z \in I_j \,$, while 
at the same time $ \psi_i $ 
is not constant on $ I_i $ 
($ i \neq j $). 
We proceed with analyzing the structure of 
$ \rfl{I_j}{\supp \varphi} \cap I_i \,$. 
First of all, 
$ I_j $ is open, so the reflection 
$ \rfl{I_j}{\supp \varphi} = 
2 \cdot \supp \varphi - I_j $ 
is also open in $ \RR $, thus 
$ \rfl{I_j}{\supp \varphi} \cap I_i $ 
is open as well. 
Hence, according to the 
Representation theorem for open sets of $ \RR $, 
it is the union of a countable collection of 
nonempty, disjoint open intervals 
(see \cite[Theorem 3.11]{Apo81}). 
That is, there exist open intervals 
$ U_n \subseteq \RR $ 
($ n = 1, \dots ,N $ where 
$ N \in \NN $ or $ N = \infty $) 
such that 
$ U_k \cap U_l = \emptyset $ for all indices 
$ k \neq l $, and 
\[
\rfl{I_j \, }{ \, \supp \varphi } \cap I_i = 
\bigcup_{n = 1}^N U_n \,. 
\]
Of course this implies $ U_n \subseteq I_i \,$. 
Now let 
$ m \in \lbrace 1, \dots, N \rbrace $ be 
an arbitrary index, and $ z \in U_m \,$. 
Since 
$ z \in \rfl{I_j}{\supp \varphi } \cap I_i \,$, 
there exist 
$ s \in \supp \varphi $ and $ w \in I_j $ such that 
$ z = 2s-w $. 
Putting $ z $ and $ w $ into \eqref{eq-maineq} 
and using that $ \varphi (s) \neq 0 $, 
we get 
$ \psi_i (z) = \psi_j(w) = \lambda $. 
So $ \psi_i $ is indeed constant on 
$ \bigcup U_n $ with value $ \lambda $. 

For the last step let us choose an arbitrary 
$ t \in \frac{1}{2} \bigl( 
\left( I_i \setminus \bigcup U_n \right) + I_j 
\bigr) $. Then there exist 
$ z \in \left( I_i \setminus \bigcup U_n \right) $ 
and 
$ w \in I_j $ such that 
$ t = \frac{1}{2}(z+w) $, which means 
$ z = 2t - w $. But if 
$ t \in \supp \varphi $ then we have 
$ z \in \rfl{I_j}{\supp \varphi} \cap I_i = \bigcup U_n \,$, 
opposed to 
$ z \in \left( I_i \setminus \bigcup U_n \right) $. 
Therefore $ \varphi(t) = 0 $ must hold for every 
$ t \in \frac{1}{2} \bigl( 
\left( I_i \setminus \bigcup U_n \right) + I_j 
\bigr) $. 
Thus we have concluded case \emph{(3)} of the theorem. 
\end{proof}

\section{Solutions when $ \varphi $ is a derivative}

We wish to emphasize that 
Theorem \ref{thm-main_measurable} is 
not an "if and only if" type statement. 
This happens because for the reverse implication 
we could only guarantee that 
$ \varphi $ is $ 0 $ almost everywhere on some intervals, 
which is insufficient to automatically produce a solution. 
Therefore we are still a step away from a characterization 
theorem analogous to \cite[Theorem 6.]{Kis24} by Kiss. 
However, we have already mentioned 
in the Introduction, that for the applications 
the meaningful condition for $ \varphi $ 
would be that it is the derivative 
of a differentiable function. 
We know that a derivative is measurable, 
so Theorem \ref{thm-main_measurable} 
gives the three possible forms of a solution. 
What is more, 
under the assumption that $ \varphi $ is a derivative, 
we can prove the converse of our theorem. 
The reason for that is the following statement. 

\begin{lemm}\label{lemm-derivative_ae00}
Let $ \emptyset \neq J \subseteq \RR $ 
be an open interval, and let 
$ \Phi : J \map \RR $ 
be a differentiable function such that 
$ \varphi = \Phi ' $ is zero almost everywhere. 
Then $ \varphi (x) = 0 $ for all $ x \in J $. 
\end{lemm}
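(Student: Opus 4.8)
The plan is to exploit the fact that a differentiable function whose derivative vanishes a.e. must in fact be constant, so that its derivative is \emph{identically} zero. The key classical input is that a differentiable function is automatically absolutely continuous on compact subintervals — more precisely, if $\Phi$ is differentiable everywhere on $J$ and $\Phi'$ is (Lebesgue) integrable on a compact interval $[a,b]\subseteq J$, then the Fundamental Theorem of Calculus in the form for derivatives holds: $\Phi(b)-\Phi(a)=\int_a^b \Phi'(t)\,dt$. (This is a standard result; one reference is the theorem that an everywhere-differentiable function with integrable derivative recovers its values by integration — sometimes attributed to the circle of ideas around the Denjoy--Perron integral, but for \emph{bounded} or \emph{integrable} derivative it is classical.)

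First I would fix an arbitrary compact subinterval $[a,b]\subseteq J$. Since $\varphi=\Phi'$ is zero almost everywhere on $J$, it is in particular zero a.e.\ on $[a,b]$, hence measurable and integrable there with $\int_a^b \varphi(t)\,dt = 0$. Applying the cited version of the Fundamental Theorem of Calculus, we get $\Phi(b)-\Phi(a)=0$; more generally, for every $c,d$ with $a\le c\le d\le b$ the same argument gives $\Phi(d)=\Phi(c)$. Since $[a,b]$ was an arbitrary compact subinterval of the interval $J$, it follows that $\Phi$ is constant on all of $J$. Consequently $\varphi=\Phi'\equiv 0$ on $J$, which is the claim.

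The step I expect to be the main obstacle — or at least the one requiring the most care in the writeup — is justifying the Fundamental Theorem of Calculus for $\Phi$ without assuming continuity of $\Phi'$. One cannot simply invoke the Lebesgue-integral FTC, which presupposes absolute continuity. The clean route is: $\Phi'$ vanishes a.e., so for every $\varepsilon>0$ the set where $|\Phi'|\ge\varepsilon$ has measure zero; combined with differentiability everywhere, a Vitali-covering argument (or the Vitali--Carathéodory / Banach--Zarecki circle of results) shows that $\Phi$ maps null sets to null sets and has bounded variation on $[a,b]$, hence is absolutely continuous there, whence $\Phi(b)-\Phi(a)=\int_a^b\Phi'=0$. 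Alternatively one can cite directly the theorem that \emph{a function differentiable at every point of $[a,b]$ with $\Phi'=0$ a.e.\ is constant} — this is a well-known consequence of the theory and is exactly what is needed. I would state this as a cited lemma and keep the proof short, since the delicate measure-theoretic lemma is classical and not the point of the paper.
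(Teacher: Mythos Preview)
Your argument is correct and is essentially the same as the paper's: both invoke the Newton--Leibniz formula for an everywhere-differentiable function with Lebesgue-integrable derivative (the paper cites \cite[Theorem 7.21]{Rud87}) to conclude $\Phi(y)-\Phi(x)=\int_x^y\varphi=0$, hence $\Phi$ is constant and $\varphi\equiv 0$. Your extended discussion of alternative justifications (Vitali covering, Banach--Zarecki) is unnecessary here, since $\varphi=0$ a.e.\ makes integrability immediate and the cited theorem applies directly.
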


\begin{proof}
It is well-known (see \cite[Theorem 7.21]{Rud87}) 
that if a real function is differentiable 
(everywhere) and the derivative is Lebesgue integrable 
on compact intervals then 
the Newton--Leibniz formula holds for the derivative. 
Hence, for all $ x, y \in J $, $ x < y $, we have 
\[
\Phi(y) - \Phi(x) = 
\int_{x}^{y} \Phi ' (t) \,dt = 
\int_{x}^{y} \varphi (t) \,dt = 0.
\]
Of course, the integral above is the Lebesgue integral, 
and we used that $ \varphi $ is zero almost everywhere. 
Hence $ \Phi $ is constant, so 
$ \varphi = \Phi' $ is identically zero. 
\end{proof}

The main result of this section is the complete 
characterization of the solutions of \eqref{eq-maineq} 
when $ \varphi $ is a derivative. 

\begin{thm}\label{thm-main_derivative}
Let 
$ \left( \varphi \,, \psi_1 \,, \psi_2 \right) $ 
be a solution of \eqref{eq-maineq} 
and suppose that 
there exists a differentiable function 
$ \Phi : J \map \RR $ such that 
$ \varphi (u) = \Phi'(u) $ 
for every $ u \in J $. 
Then exactly one of the following statements is true. 
\begin{enumerate}
\item 
$ \varphi (u) = 0 $ for all $ u \in J $ or 
there exists $ \lambda \in \RR $ such that 
$ \psi_1(x) = \psi_2(y) = \lambda $ 
for all $ x \in I_1 $ and $ y \in I_2 \,$. 
\item 
There exist constants 
$ \lambda, \nu \in \RR $ and, for 
$ i = 1,2 $, there exist 
$ a_i \,, b_i \in \RRx $ such that 
$ \inf I_i \leq a_i \leq b_i \leq \sup I_i $ and 
for the open intervals 
\[
U_i := \left] \, \inf I_i \,, a_i \, \right[ 
\qquad \mbox{ and } \qquad 
V_i := \left] \, b_i \,, \sup I_i \, \right[ 
\]
the following assertions are fulfilled: 
\begin{itemize}
\item 
$ U_i \neq I_i $ and $ V_i \neq I_i $ 
for $ i = 1,2 \,$; 
\item 
$ U_1 \,, U_2 $ are both nonempty or 
$ V_1 \,, V_2 $ are both nonempty; 
%(possibly $U_1 \,, U_2 \,, V_1 \,, V_2$ are all nonempty);
\item 
$ \psi_1 (x) =  \psi_2 (y) = \lambda $ for all 
$ (x,y) \in U_1 \times U_2 $ and 
$ \psi_1 (x) =  \psi_2 (y) = \nu $ for all 
$ (x,y) \in V_1 \times V_2 \,$; 
\item 
$ \varphi (u) = 0 $ for all 
$ u \in 
\frac{1}{2} \left( K_1 + I_2 \right) \cup 
\frac{1}{2} \left( I_1 + K_2 \right) $ 
where 
$ K_i := I_i \setminus \left( U_i \cup V_i \right) $ 
is a nonempty subinterval 
which is closed in $ I_i $ ($ i=1,2 $). 
\end{itemize}

\item 
There exist a constant $ \lambda \in \RR $ 
and an index 
$ j \in \lbrace 1,2 \rbrace $ such that 
$ \psi_j(z) = \lambda $ for all $ z \in I_j \,$, 
while there exists a finite or countable collection of 
disjoint, nonempty open intervals 
$ U_n  \subset I_i $ 
($ i \neq j $ and $ n = 1, \dots, N $ 
where $ N \in \NN $ or $ N = \infty $) 
with the following properties: 
\begin{align*}
& \bigcup_{n=1}^N U_n \neq I_i 
%\,, \  \mu (U_n) \geq \mu (I_j) 
\ \mbox{ and } \ \psi_i(z) = \lambda 
\ \mbox{ for all } \ z \in \bigcup_{n=1}^N U_n \,, 
\mbox{ moreover } \\  
& \varphi(t) = 0 \ \mbox{ for all } \ 
t \in \frac{1}{2} \left( 
\left( I_i \setminus \bigcup_{n=1}^N U_n \right) + I_j 
\right).  
\end{align*}
\end{enumerate} 
Conversely, if there exist nonempty open intervals 
$ I_1 \,, I_2 \subseteq \RR $, 
$ J = \frac{1}{2} \left( I_1 + I_2 \right) $ 
such that for the functions 
$ \psi_1 : I_1 \map \RR $, 
$ \psi_2 : I_2 \map \RR $ and 
$ \varphi : J \map \RR $ one 
of the three cases above holds, 
then the triplet 
$ \left( \varphi \,, \psi_1 \,, \psi_2 \right) $ 
is a solution of \eqref{eq-maineq}. 
\end{thm}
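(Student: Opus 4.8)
The plan is to establish the two implications separately, obtaining the forward direction from Theorem~\ref{thm-main_measurable} together with Lemma~\ref{lemm-derivative_ae00}, and proving the converse by verifying \eqref{eq-maineq} pointwise in each of the three cases.

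For the forward implication I would first note that a derivative is measurable, so Theorem~\ref{thm-main_measurable} applies and exactly one of its three cases holds, with the \emph{same} intervals $U_i,V_i,K_i$ and constants $\lambda,\nu$ that appear in the present statement. These cases are verbatim the present ones except that the conclusions about $\varphi$ are weaker: the first case only gives $\mu(\supp\varphi)=0$, and the second only gives $\mu\bigl((\tfrac{1}{2}(K_1+I_2)\cup\tfrac{1}{2}(I_1+K_2))\cap\supp\varphi\bigr)=0$. The first gap is bridged by Lemma~\ref{lemm-derivative_ae00}: if $\mu(\supp\varphi)=0$ then $\varphi=\Phi'$ is zero almost everywhere on $J$, hence $\varphi\equiv 0$ on $J$. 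For the second, each of $\tfrac{1}{2}(K_1+I_2)$ and $\tfrac{1}{2}(I_1+K_2)$ is a nonempty open subinterval of $J$ — it is the Minkowski sum of an interval with the open interval $I_2$ (respectively $I_1$), rescaled by $\tfrac12$ — and on each of these subintervals $\varphi=\Phi'$ vanishes off a null set; applying Lemma~\ref{lemm-derivative_ae00} to the restriction of $\Phi$ to each of them yields $\varphi\equiv 0$ there. Case~(3) carries over without change. Finally the exclusivity clause is inherited, because each case of the present theorem plainly implies the correspondingly numbered case of Theorem~\ref{thm-main_measurable}, whose cases are mutually exclusive.

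For the converse, assume that one of cases (1)--(3) holds; fix $x\in I_1$, $y\in I_2$ and put $u=\tfrac{x+y}{2}\in J$. It suffices to show $\varphi(u)\bigl(\psi_1(x)-\psi_2(y)\bigr)=0$. In case (1) this is immediate, since then $\varphi(u)=0$ or $\psi_1(x)-\psi_2(y)=0$. In case (3), write $\lbrace i,j\rbrace=\lbrace 1,2\rbrace$ with $\psi_j\equiv\lambda$ on $I_j$; the $j$-th argument already carries $\psi_j$-value $\lambda$, so if the $i$-th argument lies in $\bigcup_n U_n$ then $\psi_i$ equals $\lambda$ there as well, whereas if it does not, then $u\in\tfrac{1}{2}\bigl((I_i\setminus\bigcup_n U_n)+I_j\bigr)$, where $\varphi$ vanishes — either way the product is $0$.

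The one substantive step, and the place I expect the main difficulty, is case (2) of the converse. Supposing $\varphi(u)\ne 0$, I must show $\psi_1(x)=\psi_2(y)$. Consider $S:=\lbrace t\in I_1 : 2u-t\in I_2\rbrace$, an open subinterval of $I_1$ with $x\in S$, so that its reflection $2u-S=\lbrace t\in I_2 : 2u-t\in I_1\rbrace$ satisfies $y\in 2u-S$. If some $t\in S$ lay in $K_1$, then $u=\tfrac{1}{2}\bigl(t+(2u-t)\bigr)\in\tfrac{1}{2}(K_1+I_2)$, contradicting $\varphi(u)\ne 0$; so $S\cap K_1=\emptyset$, and since $S$ is an interval and the nonempty interval $K_1$ separates $U_1$ from $V_1$ inside $I_1$, either $S\subseteq U_1$ or $S\subseteq V_1$. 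Symmetrically $2u-S\subseteq U_2$ or $2u-S\subseteq V_2$. The matching combinations $U_1$ with $U_2$ and $V_1$ with $V_2$ give $\psi_1(x)=\psi_2(y)$ (equal to $\lambda$, respectively to $\nu$) at once. For the mismatched combination $S\subseteq U_1$, $2u-S\subseteq V_2$ I would use the endpoint dichotomies: $S\subseteq U_1$ forces $u\le\tfrac{1}{2}(a_1+\inf I_2)$ (the only alternative would force $u\ge\sup J$, impossible), while $2u-S\subseteq V_2$ forces $u\ge\tfrac{1}{2}(\sup I_1+b_2)$, and together these give $\sup I_1-a_1\le\inf I_2-b_2$, absurd because the left side is positive ($U_1\ne I_1$ forces $a_1<\sup I_1$) and the right side is negative ($V_2\ne I_2$ forces $b_2>\inf I_2$). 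The combination $S\subseteq V_1$, $2u-S\subseteq U_2$ is excluded the same way, using $V_1\ne I_1$ and $U_2\ne I_2$. The delicate point throughout is the endpoint bookkeeping — correctly handling the degenerate possibilities ($U_i$ or $V_i$ empty, $K_i$ a single point) and confirming that the hypotheses $U_i\ne I_i$ and $V_i\ne I_i$ are exactly what eliminate the mismatched configurations — after which $\psi_1(x)=\psi_2(y)$ in every admissible case, \eqref{eq-maineq} holds, and the triplet is a solution.
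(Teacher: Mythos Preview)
Your forward implication is exactly the paper's: apply Theorem~\ref{thm-main_measurable} (derivatives are measurable) and then use Lemma~\ref{lemm-derivative_ae00} on the relevant open subintervals of $J$ to upgrade the null-set conclusions about $\varphi$ to identically-zero conclusions.

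For the converse the paper gives no argument at all; it simply cites the proof of \cite[Theorem~6]{Kis24}. You instead supply an explicit pointwise verification, which is a genuine (and welcome) addition. Your cases~(1) and~(3) are correct as written. Your case~(2) argument via the reflection interval $S=\lbrace t\in I_1:2u-t\in I_2\rbrace$ is also correct, but one parenthetical is slightly off: when you say ``the only alternative would force $u\ge\sup J$, impossible'', the actual alternative to $2u-\inf I_2\le a_1$ in bounding $\sup S=\min(\sup I_1,\,2u-\inf I_2)$ is that $\sup S=\sup I_1$, which together with $S\subseteq U_1$ forces $\sup I_1\le a_1$ and hence $U_1=I_1$, contradicting the hypothesis $U_1\ne I_1$ (this reasoning also covers the unbounded situation $\inf I_2=-\infty$, where $S\subseteq U_1$ is simply impossible). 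The symmetric parenthetical for $2u-S\subseteq V_2$ should be fixed the same way, using $V_2\ne I_2$. With that correction your inequalities $u\le\tfrac12(a_1+\inf I_2)$ and $u\ge\tfrac12(\sup I_1+b_2)$ stand, the mismatched configurations are excluded exactly as you argue, and the converse is complete and self-contained --- a nice complement to the paper's citation.
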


\begin{proof}
Firstly let us observe that the direct implication 
is a special case of Theorem \ref{thm-main_measurable}, 
because $\varphi$ is a derivative, so it is measurable. 
However, in case \emph{(1)} and \emph{(2)} we can 
improve the assertions 
\[
\mu \left( \supp \varphi \right) = 0 
\ \mbox{ and } \ 
\mu \left( \left( 
\frac{1}{2} \left( K_1 + I_2 \right) \, \cup \, 
\frac{1}{2} \left( I_1 + K_2 \right) \right)
\cap \supp  \varphi \right) = 0 
\]
to 
\[
\varphi(u) = 0 
\, \mbox{ for all }
u \in  J
\hspace{5mm} \mbox{ and } \hspace{5mm} 
\varphi(u) = 0 
\, \mbox{ for all }
u \in  
\frac{1}{2} \left( K_1 + I_2 \right) \cup 
\frac{1}{2} \left( I_1 + K_2 \right),
\]
respectively. This fact is the consequence 
of Lemma \ref{lemm-derivative_ae00}, which 
ensures that if 
$ \varphi $ is zero almost everywhere on an open interval, 
the $ \varphi $ is identically zero on that interval. 
The converse implication was verified by T.~Kiss during 
the proof of \cite[Theorem 6.]{Kis24}. 
\end{proof}

%--------------------------------------------------- 

\end{document}